\newcommand{\aaa}{{\bf a}} 
\newcommand{\bbb}{{\bf b}} 
\newcommand{\ccc}{{\bf c}} 
\newcommand{\ddd}{{\bf d}} 
\newcommand{\eee}{{\bf e}} 
\newcommand{\Int}{\text {int} \ }
\newcommand{\R}{\mathbb R} 
\newcommand{\Z}{\mathbb Z} 
\newcommand{\C}{\mathbb C} 
\begin{document}


\author{Dmitry V. Bolotov}
\address{B. Verkin ILTPE of NASU,  47 Nauky Ave., Kharkiv, 61103, Ukraine}
\email{bolotov@ilt.kharkov.ua}
\title[On 2-convex surfaces]{On 2-convex  non-orientable surfaces    in four-dimensional Euclidean space}
\maketitle

\newcommand{\ep}{\varepsilon}
\newcommand{\eps}[1]{{#1}_{\varepsilon}}
\begin{abstract}
	We prove that a 2-convex closed surface $S\subset E^4$  in the four-dimensional Euclidean space $E^4$, which is either  $C^2$-smooth or polyhedral, provided that each vertex is incident to at most five edges, admits a mapping of degree one to a two-dimensional torus, where the degree is assumed to be$\mod 2$ if $S$ is nonorientable. As a corollary, we show that the projective plane and the Klein bottle do not admit such a 2-convex embedding in $E^4$.
\end{abstract}
\section {Introduction}

Let us recall the following definition (see \cite[\S 16]{Ze}).

\begin{definition}\label{def}
	A subset $C$ of  the Euclidean space $E^n$ is called $k$-convex if through each point $x\in E^n\setminus C$ there passes a $k$-dimensional plane $\pi_x$ that does not intersect $C$.
\end{definition} 
Note that for the case $k=n-1$  we obtain one of the equivalent definitions of usual  convexity. If we, moreover,  replace  $E^n$ by the complex affine space  $\C^n$ and  take the hyperplane $\pi_x$    to be complex, we obtain the definition of linear convexity  of  $C\subset \C^n$ (see \cite[Definition 1.6]{Ze}).   Recall, that the concept of linear convexity in two-dimensional complex  plane $\C^2$ was first introduced in \cite{BP}  and has gained  great importance  in complex analysis in several variables (see \cite{Ma},\cite{A}). Topological properties of linearly convex sets and their real analogues are represented in \cite{Ze}.

Clearly, linearly convex sets in $\C^n$ are $(n-2)$ - convex in $\R^{2n}\simeq \R^n\oplus i \R^n=\C^n$.    It is not hard to see that the Clifford torus  ($S^1\times S^1\subset \C^2$) is a linearly convex subset of the complex affine plane  $\C^2$ (see \cite[Example 1.3]{Ze}), in particular,   this gives  an  example of  a 2- convex subset  of $E^4$.  

  Yu. Zelinsky  asked (see \cite[Question 30.5a]{Ze}):  {\it Is there a 2-convex embedding of a compact $K$, which is a homological  sphere $S^2$, in the Euclidean space $E^4$?} 

In papers \cite{B1}, \cite{B2} we gave a partial answer to this question and showed that there is no 2-convex embedding $f:S^2\to E^4$ which is  $C^2$ or  $PL$ - embedding such that the  valence of  vertices does not exceed five.   For simplicity  denote  such $PL$ embeddings  by $PL(5)$. 

In the present paper  we show that our result obtained for the sphere is actually a corollary of  the following   topological  property of   2-convex closed surfaces, which  are  either $C^2$-smooth or $PL(5)$  embedded in $E^4$. Namely, we prove that  such a surface  admits a mapping of degree one onto a two-dimensional torus (Theorem \ref{deg 1}).   As a corollary, we show that  the projective plane and  Klein bottle  do not admit $C^2$- smooth or $PL(5)$  embedding in $E^4$ (Corollary \ref{main}). Note that by the Gauss-Bonnet theorem, the Klein bottle does not admit $PL(5)$-triangulations.

\section{Preliminaries}
\subsection{Degree mod 2 of a mapping}

Let $f: M\to N$ be a differentiable mapping  between  two $n$ - dimensional differentiable closed manifolds. 
\begin{definition}[see \cite {M}]\label{def1}
	Let $y\in N$ 	be a regular value, then we define the degree $\mod 2$  of 
	$f$ (or $\deg_2f$)	by  
	\begin{equation}\label{eq11}
		deg_2 f:=\#  f^{-1}y \ (\mod 2)
		\end{equation}

	It can be shown that the degree $\mod 2$ does not depend on the regular value 	$y$.

As in the orientable   case,  using  Poincar\'e duality with coefficients $\Z_2$,  we have
\begin{equation}\label{eq12}
		deg_2 f:= f^*{\bf u},
\end{equation} 
where $\bf u$ is the  generator of $H^n(N;\Z_2)\simeq \Z_2$ and $f^*: H^n(N;\Z_2)\to H^n(M;\Z_2)$ is the homomorphism of the cohomology groups induced by $f$.  
	\end{definition}

Let us recall the cohomology ring structure of the  torus $T^2$, the projective plane $P^2$ and the Klein bottle $K^2$. 

\renewcommand{\arraystretch}{1.8} 
\renewcommand{\tabcolsep}{0.25cm}   
\begin{table}[h]\footnotesize 
	\caption{Ring structure} \label{1}
	\begin{tabular}{|c|c|c|c|}
		\hline
		A surface S & $T^2$ & $K^2$ & $P^2$\\
		\hline
		Cohomology group {$H^1(S;\Z_2)$}: & $\Z_2\oplus \Z_2$ & $\Z_2\oplus \Z_2$ & $\Z_2$ \\ 
		generators& \aaa,\bbb & \ccc,\ddd & \eee \\
		\hline
		Cohomology ring $H^*(S;\Z_2)$ : & $\Z_2[\aaa,\bbb]/( \aaa^2,\bbb^2)$&  $\Z_2[\ccc,\ddd]/ (\ccc^2,\ddd^3, \ddd^2-\ccc\ddd)$& $\Z_2[\eee]/\eee^3$\\
		\hline
	\end{tabular}

\end{table}
	Here $\Z_2[x_1,\dots,x_n]$  denotes the polynomial ring over  $\Z_2$ in $n$ variables. 

\subsection {Construction}
Recall the construction that underlies the proof of the main result in the case of the sphere (see \cite{B1}, \cite{B2}). Denote by $S$ a closed surface which is $C^2$-smooth or $PL(5)$  embedded into the Euclidean space $E^4$.  Since $S$ is compact, there exists a ball $B^4 \subset E^4$ that contains $S$. We will decrease the radius of $B^4$, until its boundary $S^3 := \partial B^4$ touches  $S$ at some point $p$.   Let  $T_pS^3$ denotes  the 3-dimensional plane  in $E^4$ tangent to $S^3$ at the point $p$.
As was shown in  \cite{B1}, \cite{B2} there exists  
 another 3-dimensional plane $\Pi$ parallel  and sufficiently close to  $T_pS^3$, which divides $E^4$ into  closed  half-spaces $E^4_+\supset p$ and $E^4_-$ such that  the intersection $\gamma:= \Pi\cap S= S_+\cap S_-$, where   $S_{\pm} = E^4_{\pm}\cap S$,  satisfies the  properties represented in Table \ref{2} below.

\renewcommand{\arraystretch}{1.8} 
\renewcommand{\tabcolsep}{0.1cm}   
\begin{table}[h]\footnotesize
	\caption{Properties of $\gamma$.} \label{2}
	\begin{tabular}{|c|c|}
					\hline
		$C^2$ - case & PL(5)-case \\ 
				\hline
		$\gamma $ is a connected $C^2$-smooth curve,   &  $\gamma $ is  a closed $k$-segmented polygonal chain, \\
			$\gamma \subset \partial L_{\gamma}$,  	where $L_{\gamma}$ denotes the  convex hull of  $\gamma$ &  $3\leq k\leq 5$
								\\ 
		\hline
				$p\in S_+$  and $S_+$ is homeomorphic to the disk $D^2$ & $S_+ =C\gamma$ is the cone with vertex $p$ and base $\gamma$.\\ 
				\hline 
	\end{tabular}

\end{table}

These properties are obvious in  $PL(5)$ - case.  In $C^2$ - case we can represent the surface $S$ in a small neighborhood of the point $p$ as follows:
 \[
 \begin{cases}
	x^3 = f(x^1,x^2) \\
	x^4 = g(x^1,x^2), 
\end{cases}
\]
where  $\{x_i , i = 1,\dots,4\}$  are  the Euclidean coordinates in $E^4$ such that the point $p$  is the origin and the coordinate frame $\{e_i , i = 1, \dots , 4\}$ has the property that $\{e_1,e_2\}$ defines the basis of the tangent plane $T_p S$  and $e_3$  is orthogonal to the tangent plane $T_pS^3$, and $f$ is a convex function.   Thus,  the desired 3-dimensional plane $\Pi$ can be taken as $\{x^3=\varepsilon\}$, where $\varepsilon$ is chosen such that the curve $f(x^1,x^2)=\varepsilon$ defines a convex closed curve in the 3-dimensional  plane $\Pi_0:=\{x^4=0\}$.   

	Observe that $l_x:=\pi_x \cap\Pi$ is a  line if $x\in L_{\gamma}\setminus \gamma$ (see Definition \ref{def}).   
The basic observation we made for the case of sphere  in \cite{B1}, \cite{B2}   is  transferred one-to-one to the case of an arbitrary closed closed surface at the form of the following theorem.

\begin{theorem} \label{th1} Let  $S\subset E^4$ be a 2-convex closed surface, which  is  $C^2$ or $PL(5)$ -  embedded in $E^4$.  Let $\Pi$ be a 3-dimensional plane in $E^4$ satisfying the conditions above  (see Table \ref{2}). Then one of the following possibilities occurs:
	\begin{enumerate}
		\item  There exist $x\in L_{\gamma}\setminus \gamma$ and $\pi_{x}$ from Definition \ref{def}, such that $[\gamma]$ is a  generator of $\pi_1(\Pi\setminus l_x)\cong \Z$\footnote{In fact, such a situation is impossible (see the proof of Theorem \ref{deg 1}) .}.
		\item   There exist $x_1, x_2 \in \Int L_{\gamma}$ and $\pi_{x_1}, \pi_{x_2}$ from Definition \ref{def},  such that: 
		\begin{enumerate}
		\item [(a)]  $\pi_{x_1}$ and $ \pi_{x_2}$ are in a general position.  
			\item [(b)] $l_{x_1}\cap l_{x_2}=\emptyset$;
			\item  [(c)]$[\gamma] =[a][b][a]^{-1}[b]^{-1}$, where $a,b$ are the  circles of the bouquet $S^1\vee S^1$ representing  the generators of the group $\pi_1(\Pi\setminus (l_{x_1}\cup l _{x_2}))\cong \Z * \Z$ (see Fig. \ref{r1}).
			
			\end{enumerate} 
		\end{enumerate}

	\end{theorem}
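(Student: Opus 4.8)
The plan is to push everything into the affine $3$-plane $\Pi\cong E^3$ and to compute the class of $\gamma$ in the complement of the lines $l_x$ by means of a spanning surface for $\gamma$ on the sphere $\partial L_\gamma$. First I would record the two facts that make $\gamma$ accessible. Since $L_\gamma$ is the convex hull of a subset of the affine $3$-plane $\Pi$, we have $L_\gamma\subset\Pi$, and $\Int L_\gamma$ misses $S$: indeed $\Int L_\gamma\cap S=\Int L_\gamma\cap(\Pi\cap S)=\Int L_\gamma\cap\gamma=\emptyset$ because $\gamma\subset\partial L_\gamma$. Hence for every $x\in L_\gamma\setminus\gamma$ Definition \ref{def} supplies a $2$-plane $\pi_x$ through $x$ missing $S$, and (as already observed before the theorem) $l_x=\pi_x\cap\Pi$ is a line through $x$ disjoint from $\gamma$. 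Since $\gamma$ is an embedded circle on the $2$-sphere $\partial L_\gamma$, it bounds there two complementary disks $D_1,D_2$ with $\partial D_i=\gamma$.

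The key computation is the linking dichotomy for a single line. Taking $D_1\subset\partial L_\gamma$ as a spanning surface for $\gamma$, the class $[\gamma]\in\pi_1(\Pi\setminus l_x)\cong\Z$ equals the algebraic intersection number $l_x\cdot D_1$. For an interior $x$ the line $l_x$ meets the convex surface $\partial L_\gamma$ in exactly two points $p_x,q_x$, so it can meet $D_1\subset\partial L_\gamma$ only there; consequently $[\gamma]\in\{-1,0,1\}$, with $[\gamma]=\pm1$ precisely when $p_x,q_x$ lie in different complementary disks and $[\gamma]=0$ when they lie in the same disk (the two crossings then entering and leaving $L_\gamma$, hence having opposite sign). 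If some $x$ yields exit points in different disks, then $[\gamma]$ generates $\pi_1(\Pi\setminus l_x)$ and possibility (1) holds.

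In the remaining case every admissible line has both exit points in one complementary disk, so $[\gamma]$ vanishes in $H_1(\Pi\setminus l_x)$ for each single line. Here I would produce two points $x_1,x_2\in\Int L_\gamma$ whose lines $l_{x_1},l_{x_2}$ both pierce the \emph{same} disk, say $D_1$, in interleaved cyclic order, and then place the pair in general position: since the set of $2$-planes through $x_i$ missing $S$ is open, a small perturbation of $x_i$ and $\pi_{x_i}$ makes $\pi_{x_1}\cap\pi_{x_2}$ a single point $y$, giving (2a), and arranging $y\notin\Pi$ forces $l_{x_1}\cap l_{x_2}=\pi_{x_1}\cap\pi_{x_2}\cap\Pi=\emptyset$, giving (2b). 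Two disjoint lines in $E^3$ give $\pi_1(\Pi\setminus(l_{x_1}\cup l_{x_2}))\cong\Z*\Z=\langle a,b\rangle$ on the two meridians, so $[\gamma]$, being null-homologous, lies in the commutator subgroup; reading the boundary word of the four-punctured disk $D_1$ in the interleaved order yields $[\gamma]=[a][b][a]^{-1}[b]^{-1}$, which is (2c). The interleaving is exactly what distinguishes this commutator from the trivial word $aa^{-1}bb^{-1}$ obtained from a non-interleaved pattern or from two lines piercing different disks.

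The hard part will be this last realization step: guaranteeing that the $2$-convex planes can be chosen so that the two lines clasp one and the same complementary disk in interleaved position, rather than in a pattern giving the trivial word or some other element of $[\Z*\Z,\Z*\Z]$. This is where the convex-geometric normal form near $p$ recorded in Table \ref{2} must be invoked: in the $C^2$ case the fact that $\gamma$ projects to a convex closed curve in $\Pi_0=\{x^4=0\}$, and in the $PL(5)$ case the cone structure $S_+=C\gamma$, constrain the exit points of the lines $l_x$ on $\partial L_\gamma$ enough to force the interleaved configuration whenever possibility (1) fails. Once that configuration is secured, the boundary-word computation and the genericity of the perturbations are routine, so the whole proof reduces to this combinatorial–geometric realization of the commutator.
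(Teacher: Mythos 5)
Your setup and your single\mbox{-}line dichotomy agree with the paper: a line $l_x$ through $x\in\Int L_{\gamma}$ meets the convex surface $\partial L_{\gamma}$ in exactly two points, and $[\gamma]$ generates $\pi_1(\Pi\setminus l_x)$ exactly when these two points lie in different complementary discs, so possibility (1) can fail only if every admissible line has both exit points in one disc. But your plan for possibility (2) has a fatal flaw: you want both lines $l_{x_1},l_{x_2}$ to pierce the \emph{same} disc $D_1$. In that situation $[\gamma]$ is \emph{always} trivial, whatever ``interleaved'' arrangement you impose: since each $l_{x_i}$ meets $\partial L_{\gamma}$ in exactly two points and both of them lie in $D_1$, both lines are disjoint from the closed disc $\bar D_2=D_2\cup\gamma$ (they also miss $\gamma$ because the planes $\pi_{x_i}$ miss $S$), so $\gamma=\partial \bar D_2$ bounds an embedded disc inside the complement of $l_{x_1}\cup l_{x_2}$, giving $[\gamma]=1\neq[a][b][a]^{-1}[b]^{-1}$. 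This is exactly the spanning\mbox{-}disc argument you yourself use for a single line, now applied to $D_2$. Relatedly, your claim that reading the boundary word of the four\mbox{-}punctured disc ``in the interleaved order'' yields the commutator is incorrect in principle: the four puncture meridians are only \emph{conjugates} of $a^{\pm1},b^{\pm1}$, with conjugators determined by how the disc and the full lines sit in space, and the cyclic order of interior punctures of a disc does not determine the resulting word. Concretely, take $\gamma$ the equator of the unit sphere, $D_1$ the upper hemisphere, and two lines at heights $0.9$ and $0.92$ clasping the cap in perpendicular directions: the punctures alternate around the apex, yet $\gamma$ bounds the flat equatorial disc missing both lines, so $[\gamma]=1$.

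The paper's configuration is the opposite one, and that is not a cosmetic difference. It defines $C_i\subset \Int L_{\gamma}$ as the set of points admitting an admissible plane whose line meets $A_i$; when possibility (1) fails, these sets are open, nonempty, and cover the connected set $\Int L_{\gamma}$, hence $C_1\cap C_2\neq\emptyset$. A point $x$ of the intersection carries two admissible planes whose lines pierce \emph{different} discs ($l_1$ through $A_1$ twice, $l_2$ through $A_2$ twice) and cross at $x$; a small perturbation, chosen with the correct resolution of the crossing (one resolution gives the trivial class, the other the commutator, and both perturbed planes remain admissible since missing the compact set $S$ is an open condition), produces the nontrivial link of Fig.~\ref{r1}~a), where item $2_{(c)}$ holds --- precisely because now neither $A_1$ nor $A_2$ is a spanning disc avoiding both lines. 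Your proposal never produces this opposite\mbox{-}disc configuration: the ``hard part'' you defer is aimed at the same\mbox{-}disc configuration, which, as shown above, can never yield a nontrivial class, so the key step of item (2) is missing and cannot be repaired along the route you describe. A minor further gap: you assume $\partial L_{\gamma}$ is a $2$-sphere, which fails when $\gamma$ is flat; the paper disposes of that case at the outset by noting that item (1) then holds.
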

	\begin{figure} [h]
	{\includegraphics[scale=0.3]{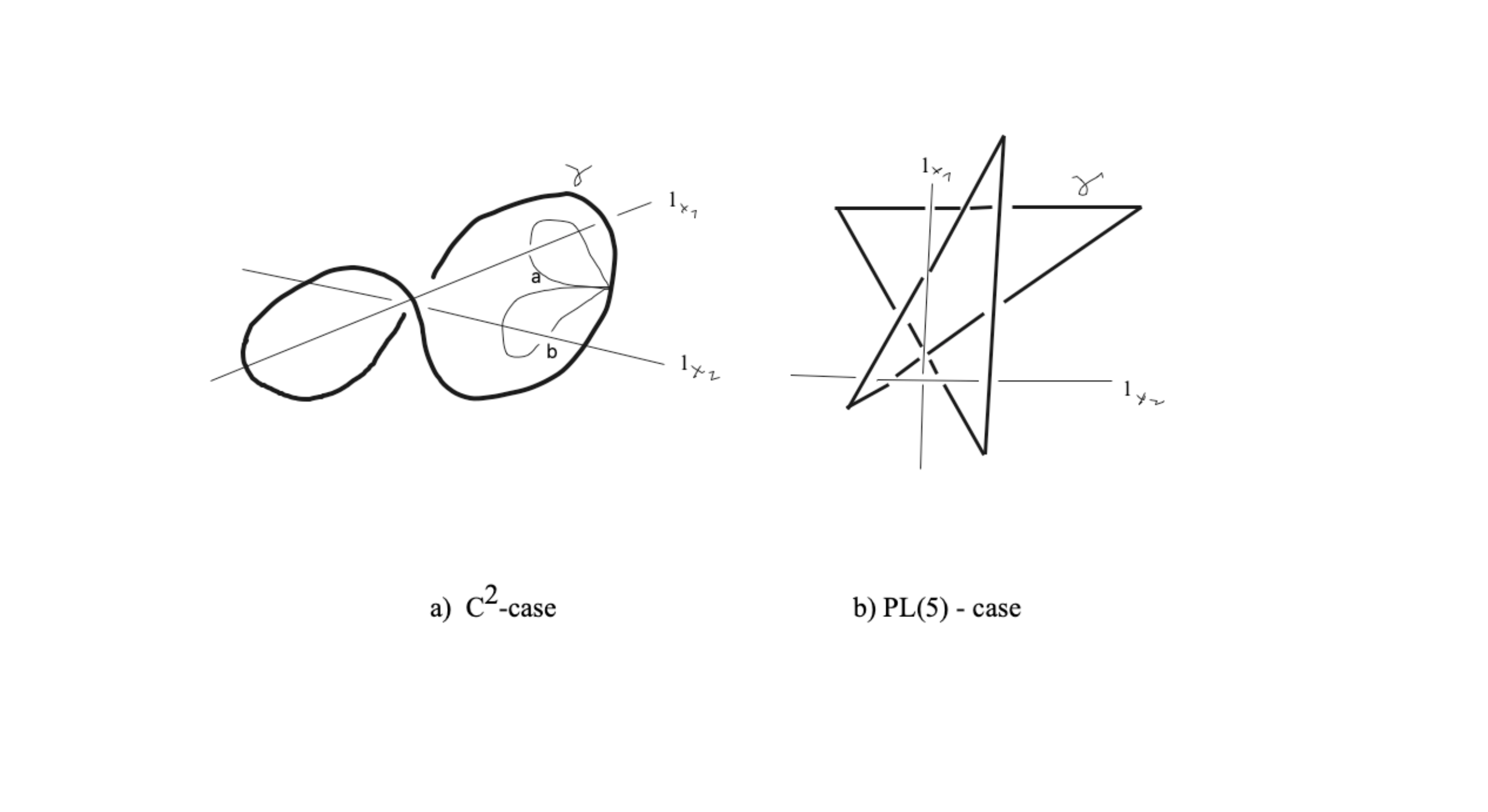}}
	\caption{ Link}
	\label{r1}
\end{figure}
{\it Sketch of proof for $C^2$-case}.  If $\gamma$ is flat then item $1$  holds obviously.  Suppose,  $\gamma$ is not flat,  then  $\gamma$  separates $\partial L_{\gamma}$ into two connected parts  $A_i, \ i=1,2,$ homeomorphic to open discs.   If there exist a point $x\in  \Int L_{\gamma}$ and $\pi_{x}$  such that $l_x\cap A_i, \  i\in \{1,2\}$  is one point, then item $1$ holds obviously.  Otherwise,  $\Int L_{\gamma} =\bigcup_i C_i,, \ i=1,2,$ where $C_i$ is characterized as follows: 
\begin{quote}
$ x_i\in C_i \Leftrightarrow $  There exists a plane $ \pi_{x_i}$ through $x_i$ from Definition \ref{def} such that $l_{x_i}\cap A_i\not=\emptyset$
 \footnote {In this case $l_{x_i}\cap A_i$ consists of two points.}.  
 \end{quote}
 
Observe that  $C_i \not= \emptyset$. Indeed,  let $y_i\in  A_i$ be a smooth boundary point, i.e. there exists only one supporting plane $T_{y_i}$ through $y_i$. Such points are everywhere dense in $A_j$ \cite {L}.  If $l_{y_i}\subset  T_{y_i}$ we can move the plane $\pi_{y_i}$ a little bit to get $l_{y_i}\cap \Int L_{\gamma}\not = \emptyset$. On the other hand, obviously, $C_i$ are open subsets in $\Int L_{\gamma}$.   From the connectivity of $\Int L_{\gamma}$ it follows that $C_1\cap C_2\not =\emptyset$.  This means that  there exist $x\in \Int L_{\gamma}$ and  planes $\pi_{i}, \ i=1,2,$ through $x$ from Definition \ref{def}  such that $l_{1}\cap l_{2}= x$ and $l_{i}\cap A_i\not=\emptyset$, where $l_i:=\pi_i\cap \Pi$.  Now, by a small perturbation of the planes $\pi_{i}$ to the planes $\pi_{x_i}$  through some points $x_1,x_2 \in \Int L_{\gamma}$ we can get items $2_{(a)}$ and $2_{(b)}$ of Theorem \ref{th1}  with the mutual arrangement of the straight lines $l_{x_1}, l_{x_2}$ and the curve $\gamma$ as a  nontrivial link as shown in Fig. \ref{r1} a).  The diagram of this link can be obtained  by the orthogonal projection $p: \Pi\to  \Pi\cap\Pi_0$. 
Item $2_{(c)}$ is checked  directly. 

\section{ Main result}

\begin{theorem}\label{deg 1}
If $ S\subset    E^4$  is a 2-convex closed surface which is either $C^2$  or $PL(5)$ - embedded into $E^4$,  then it admits a   mapping   of degree one  to a two-dimensional  torus.  If $S$ is non-orientable, we assume that  the degree is taken modulo 2.
\end{theorem}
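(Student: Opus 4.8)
The plan is to realize the torus as the complement of two $2$-planes and to restrict to $S$ the retraction onto it. First I would invoke Theorem~\ref{th1} and dispose of its alternative~1. If $[\gamma]$ were a generator of $\pi_1(\Pi\setminus l_x)\cong\Z$, then $\gamma$ would wind once around the $2$-plane $\pi_x$ in $E^4$; but $\gamma=\partial S_+$ bounds the disk $S_+\subset S$, and $S_+\cap\pi_x=\emptyset$ because $\pi_x$ misses $S$ by Definition~\ref{def}. A loop that bounds a disk disjoint from $\pi_x$ has zero winding number around $\pi_x$, a contradiction; hence only alternative~2 occurs. So I may assume there are $2$-planes $\pi_{x_1},\pi_{x_2}$ disjoint from $S$, in general position, with $l_{x_i}=\pi_{x_i}\cap\Pi$ disjoint and $[\gamma]=[a][b][a]^{-1}[b]^{-1}$ in $\pi_1(\Pi\setminus(l_{x_1}\cup l_{x_2}))\cong\Z*\Z$.

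Being in general position, $\pi_{x_1}$ and $\pi_{x_2}$ meet in a single point $O$, and $O\notin\Pi$ (otherwise $O\in l_{x_1}\cap l_{x_2}$). Placing $O$ at the origin and splitting $E^4=V_1\oplus V_2$ along the directions $V_i$ of $\pi_{x_i}$, one has
\[
E^4\setminus(\pi_{x_1}\cup\pi_{x_2})=(V_1\setminus 0)\times(V_2\setminus 0)\simeq S^1\times S^1=T^2,
\]
with retraction $r(v_1+v_2)=(v_1/|v_1|,\,v_2/|v_2|)$. Since $S$ misses both planes, the inclusion $\iota\colon S\hookrightarrow E^4\setminus(\pi_{x_1}\cup\pi_{x_2})$ is defined, and I would set $f:=r\circ\iota\colon S\to T^2$. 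By (\ref{eq12}) it suffices to show $f^*\mathbf u$ generates $H^2(S;\Z_2)$, i.e. $\deg_2 f=1$; equivalently, writing $\mathbf u=\alpha\cup\beta$ for the two winding classes, $\deg_2 f=\langle\alpha\cup\beta,\iota_*[S]\rangle$ equals the mod~$2$ intersection number $S\cdot Q$, where $Q=\{sw_1+tw_2:s,t>0\}$ is the quarter-plane that is the preimage of a regular value $(w_1,w_2)$.

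The heart of the argument is to prove $S\cdot Q\equiv 1\pmod 2$, and this is where the commutator relation of Theorem~\ref{th1} enters. Inside $\Pi\cong\R^3$ the lines $l_{x_1},l_{x_2}$ together with $\gamma$ form a link whose longitude $[\gamma]$ is the commutator of the two meridians (Fig.~\ref{r1}); this is exactly the Borromean, triple-linking configuration, whose pairwise winding numbers all vanish. I would split $[S]=[S_+]+[S_-]$ and compute $S\cdot Q=S_+\cdot Q+S_-\cdot Q$. Here the disk $S_+$ serves as a Seifert membrane for $\gamma$, and because $\gamma$ represents precisely the attaching word $[a][b][a]^{-1}[b]^{-1}$ of the unique $2$-cell of $T^2$, one expects $S_+$ to meet $Q$ transversally in a single point (contributing $1$) while $S_-$ meets $Q$ an even number of times. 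The main obstacle is exactly this linking computation: converting the homotopy statement $[\gamma]=[a,b]$ into the geometric count $S_+\cdot Q=1$, $S_-\cdot Q\equiv0$. I expect to carry it out either through the explicit coordinate form of $r$ together with the projection diagram $p\colon\Pi\to\Pi\cap\Pi_0$ used in the sketch of Theorem~\ref{th1}, or by identifying $\deg_2 f$ with the mod~$2$ Milnor triple-linking number of $\gamma\cup l_{x_1}\cup l_{x_2}$, which is $1$ for the commutator.

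Finally, for orientable $S$ the single transverse intersection point shows the integral degree of $f$ is $\pm1$, and composing with an orientation-reversing self-homeomorphism of $T^2$ if necessary yields degree exactly $1$; for non-orientable $S$ the mod~$2$ computation above is all that is required. The excluded surfaces then follow formally: by naturality of the cup product a degree-one (mod~$2$) map $S\to T^2$ pulls $\aaa\cup\bbb$ back to $f^*\aaa\cup f^*\bbb\neq0$ with $(f^*\aaa)^2=(f^*\bbb)^2=0$, and Table~\ref{1} shows that neither $H^*(P^2;\Z_2)$ nor $H^*(K^2;\Z_2)$ contains such a pair of classes, which is Corollary~\ref{main}.
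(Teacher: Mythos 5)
Your framework is attractive and genuinely different from the paper's: realizing $T^2$ as a deformation retract of $E^4\setminus(\pi_{x_1}\cup\pi_{x_2})$ via the splitting $E^4=V_1\oplus V_2$, taking $f=r\circ\iota$, and reading $\deg_2 f$ as the mod~2 count $\#(S\cap Q)$ is a legitimate route, and the statement you want is in fact true (since $T^2$ is aspherical, a map to it is determined up to homotopy by its $\pi_1$-homomorphism, and one can check your $f$ is homotopic to the map the paper constructs). But the proposal stops exactly where the theorem has content. The step you yourself call ``the main obstacle'' --- converting $[\gamma]=[a][b][a]^{-1}[b]^{-1}$ into $\#(S\cap Q)\equiv 1 \pmod 2$ --- is deferred, not done, and neither of your two suggested routes is routine: the explicit coordinate form of $r$ gives you no control over how $S_-$ meets $Q$, because you know nothing geometric about $S_-$ beyond $S_-\subset E^4_-$ and the homotopy class of its boundary; and identifying $\deg_2 f$ with a mod~2 Milnor triple-linking number of $\gamma\cup l_{x_1}\cup l_{x_2}$ is itself a nontrivial claim needing proof (with the added subtlety that this link lives in $\Pi$ while $Q$ and the degree live in $E^4$). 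Your expectation that separately $S_+\cdot Q=1$ and $S_-\cdot Q\equiv 0$ is also unjustified: the individual counts depend on the choice of $Q$ and of the embedding; only the total is an invariant. The paper avoids all of this by working constructively: it retracts $S_-$ onto a bouquet $\vee_p S^1_p$, uses the relation $\psi_*[w]=[a][b][a]^{-1}[b]^{-1}$ to extend a cellular map over $\Int S_-$ into $\bar\sigma_1$ (the torus minus an open disk --- the commutator is precisely the attaching word of that region), and then maps $S_+$ homeomorphically onto the remaining cell $\bar\sigma_2$; a regular value inside $\sigma_2$ has exactly one preimage, so the degree is one by \eqref{eq11}.

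A second, related omission: you never determine on which side of $\Pi$ the point $O=\pi_{x_1}\cap\pi_{x_2}$ lies, and this is not optional. The paper's Case 1 ($O\in E^4_-$) must be ruled out: there, central projection from $O$ retracts $E^4_+\setminus(\pi_{x_1}\cup\pi_{x_2})$ onto $\Pi\setminus(l_{x_1}\cup l_{x_2})$, so the disk $S_+$ would force $[\gamma]=1$ in $\Z\ast\Z$, contradicting item $2_{(c)}$ of Theorem~\ref{th1}. Only in Case 2 ($O\in E^4_+$) does one get the retraction $r_-$ of $E^4_-\setminus(\pi_{x_1}\cup\pi_{x_2})$ onto $\Pi\setminus(l_{x_1}\cup l_{x_2})$, and that is exactly what makes the homotopy data of Theorem~\ref{th1}, which lives in $\Pi$, applicable to $S_-$, which lives in $E^4_-$; without it the inclusion $\Pi\setminus(l_{x_1}\cup l_{x_2})\hookrightarrow E^4_-\setminus(\pi_{x_1}\cup\pi_{x_2})$ need not be a homotopy equivalence, and any transfer of the linking picture from $\Pi$ to the four-dimensional count $S\cdot Q$ collapses. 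So: right scaffolding, but the heart of the proof --- the paper's cellular extension argument, or an honest intersection/linking computation replacing it, together with the case analysis on $O$ --- is missing.
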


\begin{proof}
    Suppose  $f: S \to E^4$ is   a  2-convex embedding  satisfying the condition of the theorem.  Then Theorem \ref{th1} is satisfied. If item $1$ of Theorem  \ref {th1} holds, then we immediately come  to a contradiction, since $\gamma$ bounds $S_+\simeq D^2$, which contradicts  to $[\gamma] \not=0$ in $\pi_1(\Pi\setminus l_x)$ and therefore $[\gamma] \not=0$ in $\pi_1(E^4\setminus \pi_x)$ since $\Pi\setminus l_x$ is the deformation retract of $E^4\setminus \pi_x$. 
 
 Let us suppose that item $2$ of Theorem  \ref {th1}  is satisfied. Then from item $2_{(a)}$ it follows that  $\pi_{x_1}\cap  \pi_{x_2}$ is a point, which we denote by $O$.    There are two possible cases.
 
 {\it  Case 1. }  $O\in E_-^4$.  In this case we have the deformation retraction $$r_+: E_+^4\setminus (\pi_{x_1}\cup  \pi_{x_2})\to \Pi\setminus (l_{x_1}\cup l_{x_2}),$$ 
 which is defined as follows: 
 $$r_+(x) = l_{Ox}\cap \Pi,$$
 where $l_{Ox}$ denotes the straight line passing through $O$ and $x\in E^4_+$.  
 
 We immediately come  to a contradiction, since $\gamma$ bounds the disk $$S_+\subset E_+^4\setminus (\pi_{x_1}\cup \pi_{x_2})$$ and $[\gamma]$  must be equal to zero in $$\pi_1(E_+^4\setminus (\pi_{x_1}\cup \pi_{x_2}))\simeq \pi_1(\Pi \setminus (l_{x_1}\cup l_{x_2}))\footnote{Recall that  deformation retraction induces an isomorphism of fundamental groups.},$$ which contradicts  to item  $2_{(c)}$  of Theorem \ref{th1}. 
 
 {\it Case 2.}  $O\in E_+^4$.    In this case,  as  above, we have the deformation retraction $$r_-: E_-^4\setminus (\pi_{x_1}\cup  \pi_{x_2})\to \Pi\setminus (l_{x_1}\cup l_{x_2})$$ 
 and  the homotopically  inverse to $r_-$ embedding
 $$ i_- :  \Pi\setminus (l_{x_1}\cup l_{x_2}) \to E_-^4\setminus (\pi_{x_1}\cup  \pi_{x_2}).$$

 Moreover, we have the following homotopy equivalences:  
 
 	\begin{enumerate}
\item [] $ h_1: \Pi\setminus (l_{x_1}\cup l_{x_2}) \overset {r_1}\rightarrow S^1 \vee  S^1\overset {i_1} \hookrightarrow \Pi\setminus (l_{x_1}\cup l_{x_2})$;
 \item []$ h_2: S_-  \overset {r_2}\rightarrow  \vee_p  S_p^1 \overset {i_2}\hookrightarrow S_ -$ (see  Remark \ref{gon}),
\end{enumerate}
where $ \vee_{p=1}^n  S_p^1 $ is a bouquet of circles   generating $\pi_1(S_-)$,   and $r_k$ ,$i_k$  ($k=1,2$ ) are  the deformation retractions and the   homotopically inverse embeddings to them.  

\begin{remark} \label{gon}  Note that the choice of  the bouquet $ \vee_{p=1}^n  S_p^1 $ generating   $\pi_1(S_-)$   and the deformation retraction $r_2: S_-  \to \vee_{p=1}^n  S_p^1 $ are  ambiguous.   Moreover,   we can always choose  circles $S^1_p$ such that $\cap_{p=1}^nS^1_p\cap \gamma $ is a single point (the basepoint   of $S_-$), which we denote by $o$. 
		Fixing the orientations of the circles $S^1_p$ and cutting along them, we obtain a representation of $S_-$ by a 2n-gon  $P$ with a hole bounded by $\gamma$ and oriented sides that are pairwise identified (see Fig.\ref{r3} in the case where $S$ is either $P^2$ or $K^2$).    The deformation retraction $$r_2: P\setminus \Int  S_+ \to \partial P$$  is  shown in the Figure \ref{r3} by the thin arrows. This defines  the equality $[\gamma] = [w] \in \pi_1(S_-)$,   where $w = r_2(\gamma)$.  
	
		\end{remark}

 	\begin{figure} [h]
 	{\includegraphics[scale=0.5]{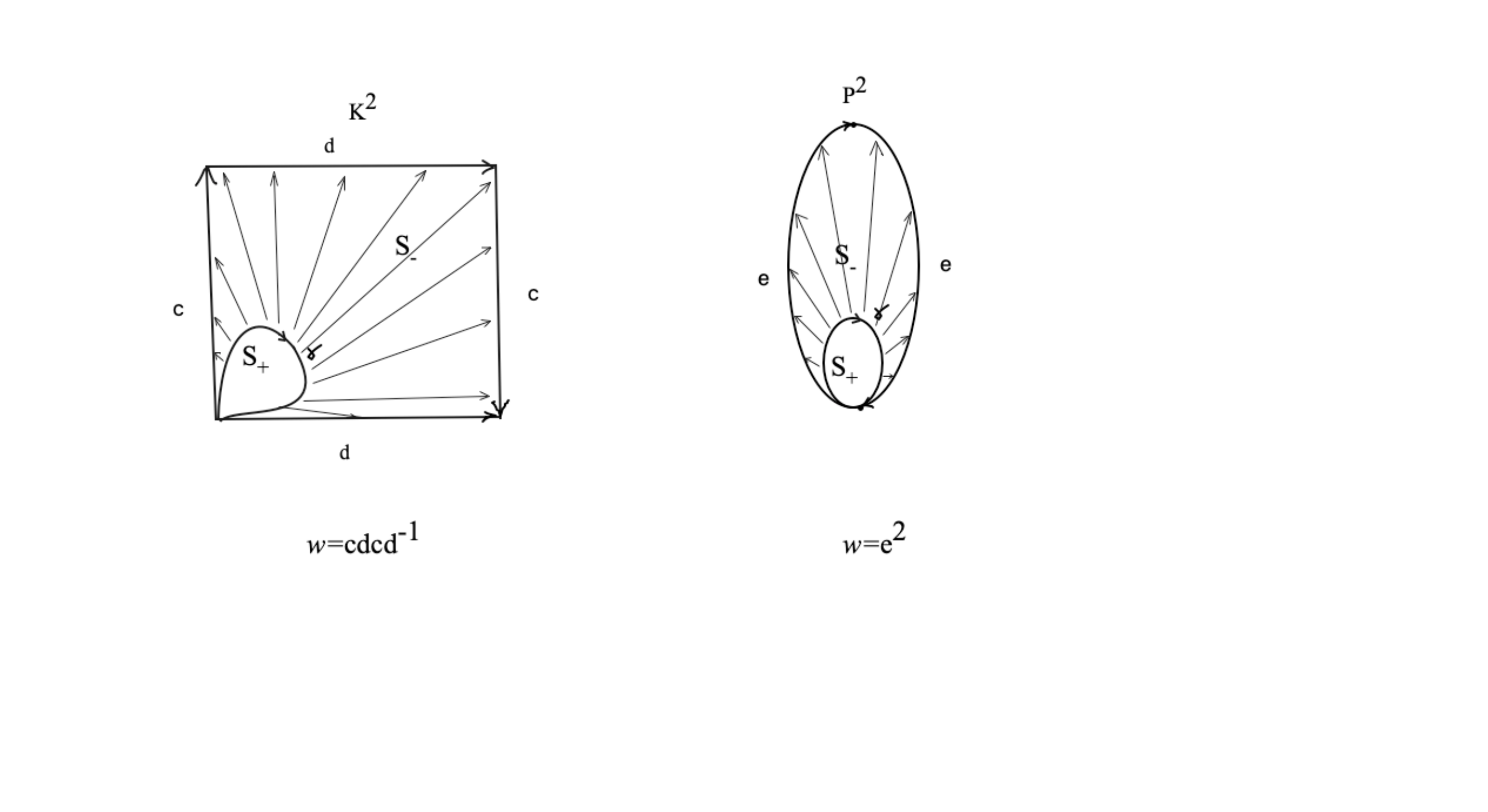}}
 	\caption{ Deformation retraction}
 	\label{r3}
 \end{figure}

 \begin{remark}
 	The marking of the sides of polygons in the  Fig.\ref{r3} are not accidental, since the sides represent homological classes of the group $H_1(S;\Z_2)$ of the  surface $S$   obtained by the gluing of the corresponding sides of the polygons,  which are    Poincar\'e dual to the classes of cohomologies in  $H^1(S;\Z_2)$ denoted by the same letters in Table \ref{1}. Note that the modulo 2  indices of intersection of homological classes  correspond to the multiplication of cohomology classes with coefficients in $\Z_2$.
 \end{remark}
 
 Let us consider the following composition of the mappings: 
 
  $$\psi:=r_1\circ r_-\circ \bar f\circ i_2:   \vee_p  S_p^1\to S^1 \vee  S^1,$$
    where $\bar f$ is defined from the composition: 
    $$  f|_{S_-}:S_-\overset{\bar f} \to E_-^4\setminus (\pi_{x_1}\cup  \pi_{x_2})\hookrightarrow E^4.$$ 
   
Taking into account the definition of $w$ and  item $2_{(c)}$ of Theorem \ref{th1} we have $${\psi}_*[w] = {\psi}_*{r_2}_*[\gamma] =( r_1\circ r_-)_*\bar f_*{i_2}_*{r_2}_*[\gamma]= ( r_1\circ r_-)_*\bar f_*[\gamma]= [a][b][a]^{-1}[b]^{-1},$$  where $*$    indicates the  induced homomorphisms of  fundamental groups.
  
  Let us complete   the natural cellular decomposition of $S^1\vee S^1$  to the cellular decomposition of  the torus $T^2$ as shown in Fig. \ref{r6}.  Observe, that $S$  has a natural  cellular decomposition   generated by $o$, $ \vee_pS^1_p\setminus o$,  $\gamma\setminus o$, $\Int S_-$, $\Int S_+$.  We claim that one can extend  $\psi$  to the continuous  map $\Psi : S\to T^2$ of degree one. 
  
  	\begin{figure} [h]
  		  	{\includegraphics[scale=0.3]{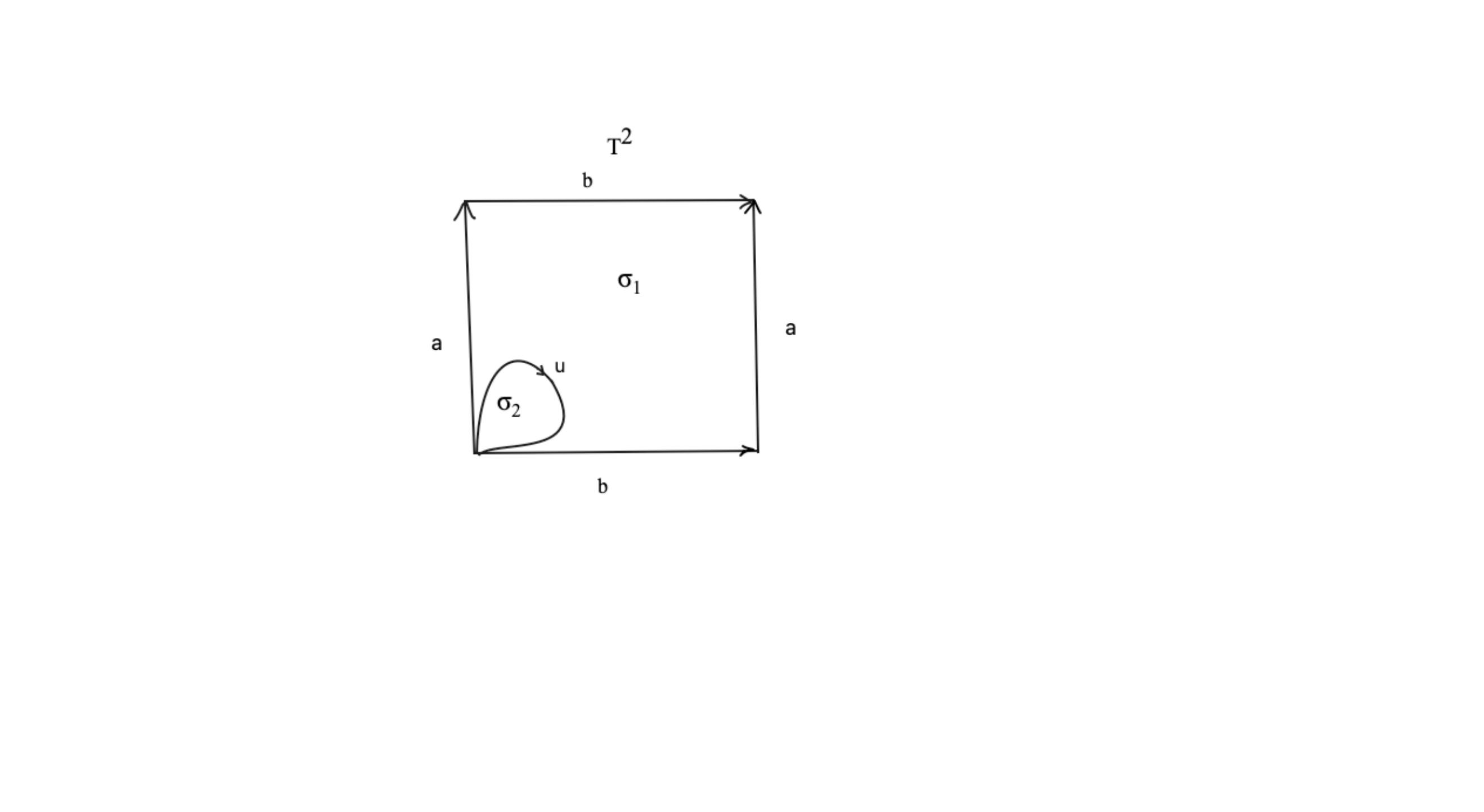}}
  	\caption{ Cellular decomposition of $T^2$ }
  	\label{r6}
  \end{figure}

  One can extend $\psi$  to   the map of 1-skeletons  $\Psi^{(1)}:S^{(1)} \to {T^2}^{(1)}, $  by   means of some  diffeomorphism $\Psi^{(1)}|_{\gamma}: \gamma\to u$ preserving orientations.  By the construction we have  $$(\Psi^{(1)})_*[\gamma]^{-1}[w]=[u]^{-1}[a][b][a]^{-1}[b]^{-1}.$$ 
Thus,  we can extend $\Psi^{(1)}$ to  $S_-$, whose images are   in $\bar\sigma_1$ (see Fig.\ref{r6}) and then  to  $S_+$ by   some   diffeomorphism    $\Psi|_{S_+} : S_+\to  \bar\sigma_2$.    Smoothing $\Psi$ and  leaving them unchanged in the  small disk $B \subset \Int S_+$, we  see that the  degree of $\Psi$  is equal to one (see \eqref{eq11}).  

\end{proof}

\begin{theorem}\label{thm2}
	Neither  the  projective plane   nor  Klein bottle   admit a continuous mapping  onto the torus $T^2$ of degree one $\mod 2$. 
	\end{theorem}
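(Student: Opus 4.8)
The plan is to argue entirely inside the $\Z_2$-cohomology ring, using the presentations recorded in Table \ref{1} together with the cohomological description \eqref{eq12} of the degree $\mod 2$. Suppose, toward a contradiction, that $f\colon S\to T^2$ is a continuous map of degree one $\mod 2$, where $S$ is either $P^2$ or $K^2$. By \eqref{eq12} this means $f^*\mathbf u\neq 0$ in $H^2(S;\Z_2)$, where $\mathbf u$ generates $H^2(T^2;\Z_2)$. From Table \ref{1} the ring $\Z_2[\aaa,\bbb]/(\aaa^2,\bbb^2)$ has its top class equal to the product $\aaa\bbb$, so $\mathbf u=\aaa\bbb$; since $f^*$ is a ring homomorphism, I obtain $f^*(\aaa)\,f^*(\bbb)=f^*(\aaa\bbb)\neq 0$ in $H^2(S;\Z_2)$. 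Note that this already proves slightly more than the statement, since no hypothesis of surjectivity is used.

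The key observation I would exploit is that the squaring map $x\mapsto x^2$ from $H^1(T^2;\Z_2)$ to $H^2(T^2;\Z_2)$ vanishes identically: one has $\aaa^2=\bbb^2=0$ by the defining relations, and over $\Z_2$ also $(\aaa+\bbb)^2=\aaa^2+\bbb^2=0$. Consequently both $f^*(\aaa)$ and $f^*(\bbb)$ are degree-one classes in $H^1(S;\Z_2)$ that square to zero, because $f^*(\aaa)^2=f^*(\aaa^2)=0$ and likewise for $\bbb$. The remaining task is therefore to identify, in each surface, which classes of $H^1(S;\Z_2)$ are killed by squaring, and to check that the product of any two such classes vanishes, contradicting $f^*(\aaa)\,f^*(\bbb)\neq 0$.

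For $P^2$ this is immediate from $H^1(P^2;\Z_2)=\Z_2\,\eee$ with $\eee^2\neq 0$: the only class with zero square is $0$, forcing $f^*(\aaa)=f^*(\bbb)=0$, so their product cannot be the nonzero generator. For $K^2$ I would compute the squares of the three nonzero classes of $H^1(K^2;\Z_2)=\Z_2\,\ccc\oplus\Z_2\,\ddd$: using the relations one finds $\ccc^2=0$, whereas $\ddd^2=\ccc\ddd\neq 0$ and $(\ccc+\ddd)^2=\ccc^2+\ddd^2=\ccc\ddd\neq 0$, so the classes annihilated by squaring form exactly the line $\Z_2\,\ccc$. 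Hence $f^*(\aaa),f^*(\bbb)\in\Z_2\,\ccc$, and their product lies in $\Z_2\,\ccc^2=0$, again contradicting $f^*(\aaa\bbb)\neq 0$. The argument uses no analysis, and there is no obstacle of real substance; the only point requiring care is the bookkeeping for $K^2$, namely confirming that $\ddd^2=\ccc\ddd$ is the nonzero top class so that precisely the subspace $\Z_2\,\ccc$ is isotropic for squaring. Once that relation is pinned down, the contradiction is forced in both cases and the theorem follows.
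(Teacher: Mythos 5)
Your proof is correct and takes essentially the same approach as the paper: both arguments work entirely inside the $\Z_2$-cohomology rings of Table \ref{1}, use \eqref{eq12} to get $f^*\aaa\cup f^*\bbb\not=0$, and obtain the contradiction from the fact that $\aaa^2=\bbb^2=0$ forces the pullbacks to be classes squaring to zero, while in $P^2$ and $K^2$ any two such classes have vanishing product. Your packaging via the kernel of the squaring map is merely a slightly more uniform organization of the paper's explicit case analysis.
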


\proof  
Let us suppose that $g: S\to T^2$  is a mapping of a closed surface $S$  to the torus $T^2$ and  $\deg_2 g =1$. Then, using   the  ring structure of   $H^*(T^2;\Z_2)$  (see Table \ref{1}) and   \eqref{eq12} we have: 
  \begin{equation} \label {eq1} 
  	g^*(\aaa\cup \bbb) =g^*\aaa\cup g^*\bbb  \not=0.
  	\end{equation}
 Note,   that  
      \begin{equation} \label {eq2} 
      	\aaa\cup \aaa = \bbb\cup \bbb=0.
      		\end{equation}

{\it Case 1}.  $S=P^2$. In this case, it follows from the ring structure  of $H^*(P^2;\Z_2)$  (see Table \ref{1}) and \eqref{eq1}  that  $g^*\aaa = g^*\bbb =\eee$. But this leads to a  contradiction:
$$  0\not =\eee^2 =g^*\aaa\cup g^*\aaa = g^*(\aaa\cup \aaa)\overset{\eqref{eq2}}=0. $$

{\it Case 2}. $S=K^2$.   Let ${\bf c}, {\bf d}$ be  generators of the group $H^1(K^2;\Z_2)\simeq \Z_2\oplus \Z_2$  (see Table \ref{1}).  If $g^*\aaa = g^*\bbb =\ccc$, then we obtain the contradiction: 
$$  0=\ccc^2 =g^*\aaa\cup g^*\bbb\overset{\eqref{eq1}}{\not=}0. $$ 
Thus,   one of two things happens: 
\begin{itemize}
\item [a)]$g^*\aaa\in\{\ddd, \ddd+\ccc\}$;
 \item [b)] $g^*\bbb\in\{\ddd, \ddd+\ccc\}$.  
\end{itemize}
Without loss of generality we can assume that    $a)$ is satisfied.
 Using the  ring structure of $H^*(K^2;\Z_2)$, 
we have: 
\begin{itemize}
	\item $\ddd^2\not=0$;
	\item   $ (\ccc+\ddd)\cup (\ccc+\ddd) = \ccc^2 +2\ccc\cup \ddd + \ddd^2 = \ddd^2\not=0.$
\end{itemize}
But this yields to a contradiction:
$ 0\not=g^*\aaa \cup g^*\aaa =g^*(\aaa\cup \aaa) \overset{\eqref{eq2}}=0.$

\endproof

 Theorems \ref{deg 1}  and  \ref{thm2} impliy: 

\begin{corollary}\label{main}  
	Neither the  projective plane nor Klein bottle admit  a $C^2$-smooth or  $PL(5)$  embedding in  four-dimensional Euclidean space as a 2-convex surface.   
\end{corollary}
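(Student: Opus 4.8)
The plan is to derive the corollary as an immediate combination of Theorems \ref{deg 1} and \ref{thm2} by a single contradiction argument. The substantive content has already been established in those two theorems, so only the bookkeeping of matching their hypotheses and conclusions remains.

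First I would argue by contradiction. Suppose one of the surfaces $S\in\{P^2,K^2\}$ admitted a 2-convex embedding into $E^4$ that is $C^2$-smooth or $PL(5)$. Both $P^2$ and $K^2$ are non-orientable, so the hypotheses of Theorem \ref{deg 1} are met in its non-orientable form, and the theorem yields a continuous map $S\to T^2$ whose degree, computed modulo $2$, equals one.

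Next I would invoke Theorem \ref{thm2}, which asserts that neither the projective plane nor the Klein bottle admits a continuous map onto $T^2$ of degree one $\mod 2$. The map produced in the previous step is exactly such a map, so the two statements are in direct conflict. This contradiction shows that no such embedding of $P^2$ or $K^2$ can exist, which is precisely the assertion of Corollary \ref{main}.

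Because the corollary is a formal consequence of two results I am permitted to assume, there is no genuine technical obstacle in this final step; the only point requiring care is that the two notions of degree are compatible. Concretely, one must confirm that the degree one supplied by Theorem \ref{deg 1} is read modulo $2$ for the non-orientable surfaces $P^2$ and $K^2$, so that it falls under the ``degree one $\mod 2$'' hypothesis excluded by Theorem \ref{thm2}. This compatibility is exactly the content of the final clause of Theorem \ref{deg 1}, so the alignment is automatic and the proof is complete. If I wanted to record where the real difficulty lies, I would note that it sits entirely upstream: Theorem \ref{deg 1} (the geometry of the 2-convexity hypothesis, via the link picture of Theorem \ref{th1}) and Theorem \ref{thm2} (the $\Z_2$ cup-product obstruction, using $\aaa^2=\bbb^2=0$ in $H^*(T^2;\Z_2)$ against the nonvanishing squares available in $H^*(P^2;\Z_2)$ and $H^*(K^2;\Z_2)$) carry the weight, and the corollary merely combines them.
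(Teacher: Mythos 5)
Your proposal is correct and matches the paper exactly: the paper derives Corollary \ref{main} as an immediate consequence of Theorems \ref{deg 1} and \ref{thm2}, precisely the contradiction argument you describe. Your extra remark about the compatibility of the two notions of degree (both read $\mod 2$ for non-orientable surfaces) is a sensible check, and it is indeed covered by the final clause of Theorem \ref{deg 1}.
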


\section {Final remarks}

Finally, we note that the question under consideration for  non-orientable surfaces $M_{\mu}^2$,  $\mu\geq 3$, where $\mu$ denotes the number of M\"obius bands glued into the sphere $S^2$, remains open. Indeed, any such surface is homeomorphic to the torus $T^2$ with $\mu-2$ M\"obius bands glued into it.  The mapping $g: M^2_{\mu}\to T^2$, contracting each such  Mobius band to a point, has $\deg_2g=1$ (see Fig. \ref{r4}) and   Theorem  \ref{thm2}  is wrong for $S= M_{\mu}^2$,  $\mu\geq 3$.

	\begin{figure} [h]
	{\includegraphics[scale=0.2]{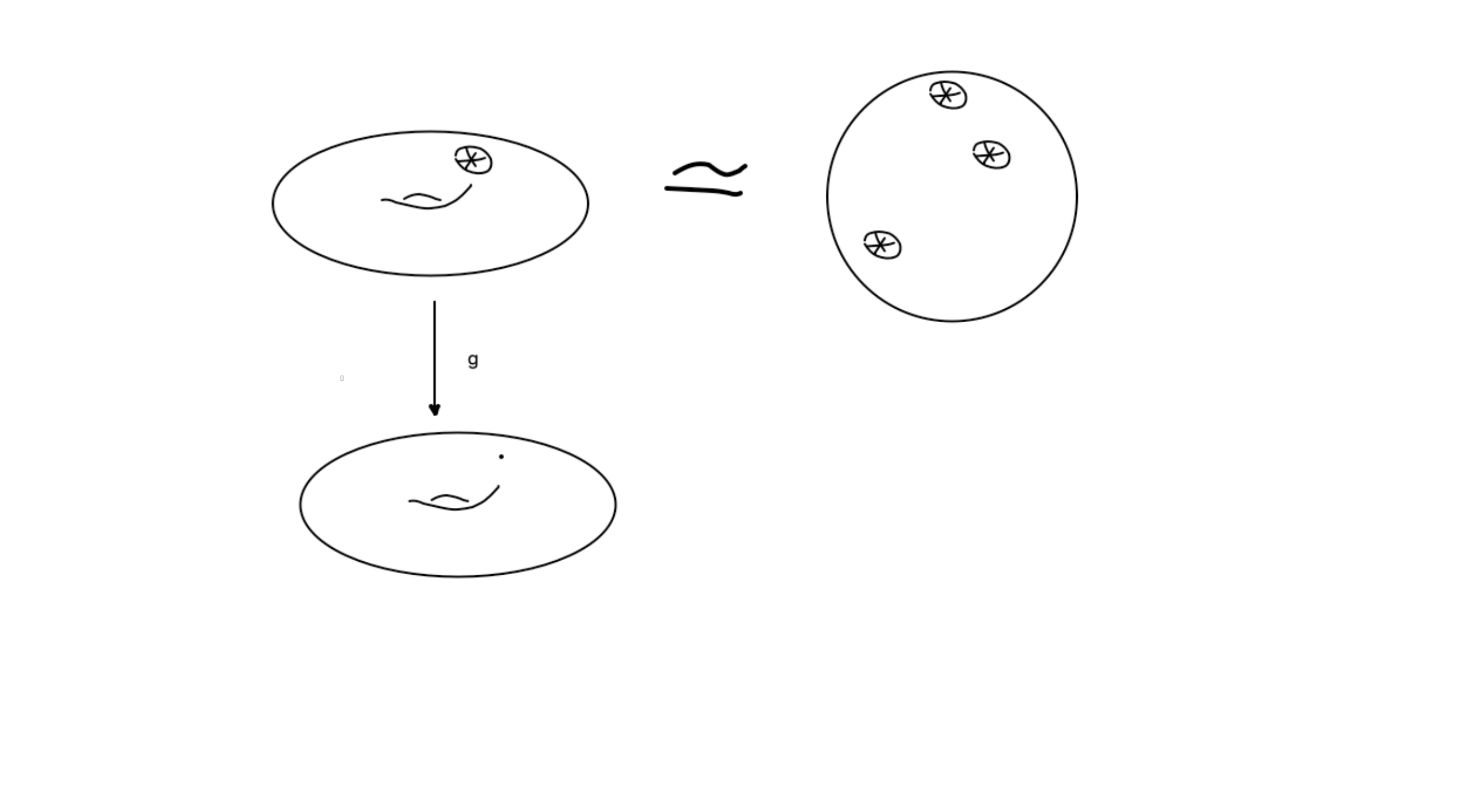}}
	\caption{  Degree one$\mod 2$ map  of $M_{3}^2$ to the torus $T^2$.}
	\label{r4}
\end{figure}


\end{document}